\theoremstyle{definition} 
\theoremstyle{definition} \newtheorem{Theo}{Theorem}
\theoremstyle{definition} 
\theoremstyle{definition} 
\theoremstyle{definition} 
\theoremstyle{definition} 
\theoremstyle{definition} \newtheorem{Exa}{Example}
\theoremstyle{definition} 
\newcommand{\ud}{\, \mathrm{d}}
\begin{document}
\begin{frontmatter}

\title{Rules and Algorithms\\ for Objective Construction of Fuzzy Sets}

\author[add1]{Lei Zhou\corref{cor1}}
\ead{zhoulei137@swust.edu.cn; zhoul137@mail.ustc.edu.cn}
\cortext[cor1]{Corresponding author}
\address[add1]{School of mathematics and physics, Southwest University of Science and Technology, 621010, Mianyang, China}

%


\begin{abstract}
This paper aims to present rigorous methodologies for constructing new fuzzy sets from existing fuzzy or classical sets, defined within the framework of a finite universe's superstructure. The paper introduces rules for assigning membership functions to these new fuzzy sets, resulting in two significant findings. Firstly, the extension of the classical property concerning the cardinality of a power set to the fuzzy context is demonstrated. Specifically, the scalar cardinality of a fuzzy set $\tilde{B}$, defined on the power set of a finite universe of a fuzzy set $\tilde{A}$, adheres to the relation $\text{card}(\tilde{B}) = 2^{\text{card}(\tilde{A})}$. Secondly, the novel algorithms enable the objective achievement and representation of arbitrary membership values through specific binary sequences.
\end{abstract}

\begin{keyword}
Fuzzy sets\sep Membership functions\sep Scalar cardinality\sep Power sets\sep  Binary sequences\sep ZFC axioms
\end{keyword}

\end{frontmatter}

\section{Introduction}
In classical set theory, an element is either a member of a set or not, represented by a discrete indicator function, or characteristic function, $\mathcal{I}_A(x): X \to \{0,1\}$, which is defined as:
\begin{equation}
\mathcal{I}_A(x) = \begin{cases}
1 & \text{if } x \in A, \\
0 & \text{if } x \notin A.
\end{cases}
\end{equation}

In 1965, Zadeh introduced the concept of a fuzzy set, which generalizes the classical notion of a set \cite{Zadeh1965}. In fuzzy set theory, the characteristic function is replaced by a membership function, $\mu_A(x): U \to [0,1]$, which indicates the degree to which an element $x$ belongs to set $A$ on a continuous scale between 0 and 1.

The membership function is a pivotal concept in fuzzy set theory, uniquely characterizing any fuzzy set. Core concepts such as the support of a fuzzy set, alpha-level set, convex fuzzy set, and set-theoretic operations (intersection, union, complement) are all defined based on membership functions \cite{Zimmermann2001}. Therefore, specifying the membership function is crucial in both the theoretical and practical aspects of fuzzy sets. Once the membership function is established, the corresponding fuzzy set is determined. Various methods have been developed to derive reasonable membership functions, including piecewise linear approaches like the widely used triangular and trapezoidal membership functions \cite{Pedrycz1994, Wang2015, Nguyen2019}. In practice, appropriate membership functions are often selected based on the subjective knowledge or perception of the observer \cite{Liu2009}, leading many researchers to consider the assignment of membership functions as inherently subjective \cite{Kandel1978, Singpurwalla2004, Ross2010, Belohlavek2011, Nguyen2019}. This subjectivity has motivated the development of objective methods for assigning membership functions that generate fuzzy sets impartially. Our proposed methods are designed to be context-independent, applicable universally without reliance on individual perceptions. These methods ensure that membership functions are derived from objective computations, thereby extending the applicability of fuzzy sets to a broader range of scenarios without introducing subjective uncertainty.

Our methods offer an advantage of objectivity by constructing new fuzzy sets from existing ones, drawing inspiration from classical set theory, specifically the ZFC axiomatic system \cite{Jech2003}. ZFC, which stands for Zermelo–Fraenkel set theory with the axiom of Choice, provides a framework for constructing new sets based on existing ones. For instance, the axiom of pairing states that given any two sets $X$ and $Y$, there exists a new set $\{X, Y\}$ containing exactly $X$ and $Y$. In the following sections, we explore the ZFC axiomatic system from a fuzzy perspective, aiming to construct new fuzzy sets from both existing fuzzy sets and classical sets. This approach treats some ZFC axioms as special cases in the fuzzy setting, considering classical sets as fuzzy sets with membership functions equal to 0 or 1.

To achieve this, we focus on the concept of cardinality, a measure of a set's size, counting the number of elements in the set. For classical sets, the cardinality of set $A$ is denoted as $|A|$ or $\text{card}(A)$. A finite set has a natural number as its cardinality, while a set with the same cardinality as the set of all natural numbers is countably infinite, denoted as $\text{card}(A) = \aleph_0$. Sets with greater cardinality are uncountable.

The theory of cardinality for fuzzy sets is a complex aspect of fuzzy set theory \cite{Wygralak2001}. Various definitions have been proposed and studied \cite{Dubois1990a, Dubois1990b, Delgado2000, Chamorro2014, Delgado2014, Michal2016, Michal2020}. The concept of scalar cardinality for fuzzy sets, introduced by De Luca and Termini, aids in understanding the measurement of information for fuzzy sets \cite{Luca1972}. In the 1980s, researchers debated whether the cardinality of a fuzzy set should be a precise real number or a fuzzy integer \cite{Zadeh1979, Gottwald1980, Blanchard1982, Wygralak1983}. Dubois and Prade proposed a unified framework \cite{Dubois1985}. This paper focuses solely on scalar cardinality.

For a fuzzy set $\tilde{A}$ defined on a universe $U$ with a membership function $\mu_{\tilde{A}}: U \to [0,1]$, the scalar cardinality is the sum of the membership values of all elements in $U$, given by
\begin{equation}
\text{card}(\tilde{A}) = \sum_{x \in U} \mu_{\tilde{A}}(x).
\end{equation}

Additionally, we consider the power set and its representation. The power set of $A$, denoted $\mathcal{P}(A)$ or $2^A$, includes all subsets of $A$. If $A$ is a finite set with $n$ elements, the cardinality of $\mathcal{P}(A)$ is $2^n$.

The superstructure over a set is another important concept. In fuzzy set theory, the universe of discourse or universe refers to the reference set. The superstructure over a universe $X$ is defined recursively: $S_0(X) = X$, $S_{n+1}(X) = S_n(X) \cup \mathcal{P}(S_n(X))$, and the superstructure over $X$, denoted $S(X)$, is
\begin{equation}
S(X) := \bigcup_{j=0}^{\infty} S_j(X).
\end{equation}

The remainder of this paper is structured as follows: Section 2 provides fuzzy interpretations of some ZFC axioms and introduces four rules for constructing new fuzzy sets from existing ones. We prove an important theorem related to power sets in the fuzzy setting. Section 3 introduces two additional rules for constructing new fuzzy sets from classical sets and proves a theorem on representing and generating any membership value using our rules. Section 4 includes numerical examples, and Section 5 concludes the paper.

\section{Constructing Fuzzy Sets from Existing Ones}

The ZFC set theory encompasses nine axioms foundational to classical set theory. Classical sets are essentially a subset of fuzzy sets, characterized by a membership function that exclusively takes values in the set $\{0, 1\}$. Within this framework, it is possible to reinterpret specific axioms of ZFC through the lens of fuzzy set theory. These reinterpretations establish criteria for the methodologies introduced in this paper for constructing new fuzzy sets, ensuring that these methodologies remain consistent with classical set theory. The ensuing paragraphs provide a detailed exploration of these reinterpretations and their implications.

\begin{enumerate}
\item The Axiom of Pairing asserts that for any two sets $A$ and $B$, there exists a set $\lbrace A, B\rbrace$ which contains exactly $A$ and $B$. The fuzzy set interpretation of this axiom stipulates that given two fuzzy sets $\tilde{A}$ defined on $X$ and $\tilde{B}$ defined on $Y$, such that for all $x \in X$ and $y \in Y$, we have $\mu_{\tilde{A}}(x) = 1$ and $\mu_{\tilde{B}}(y) = 1$, there exists a fuzzy set $\tilde{C}$ defined on $\lbrace X, Y \rbrace$ such that
\begin{equation}\label{pairing}
\mu_{\tilde{C}}(X) = \mu_{\tilde{C}}(Y) = 1.
\end{equation}
This indicates that the membership values of $\tilde{C}$ for the sets $X$ and $Y$ are both equal to 1, signifying that $\tilde{C}$ encompasses both $X$ and $Y$.

\item The Axiom of Union posits that the union of the elements in a set exists. For any set $X$, there exists a set $Y = \bigcup X$. In the context of fuzzy set theory, this axiom can be interpreted as follows: given a fuzzy set $\tilde{A}$ defined on $X$, with $\mu_{\tilde{A}}(x) = 1$ for every $x \in X$, there exists a fuzzy set $\tilde{B}$ defined on $Y$ such that
\begin{equation}\label{union}
\mu_{\tilde{B}}(y) = 1
\end{equation}
for any $y \in Y$, where $y \in x$ for some $x \in X$.

\item The Axiom of Power Set asserts that for any set $X$, there exists a set $\mathcal{P}(X)$ which contains all the subsets of $X$. In fuzzy set theory, this axiom can be interpreted to mean that if we have a fuzzy set $\tilde{A}$ defined on $X$, with $\mu_{\tilde{A}}(x) = 1$ for every $x \in X$, then there exists a fuzzy set $\tilde{B}$ defined on $Y = \mathcal{P}(X)$ such that
\begin{equation}\label{power}
\mu_{\tilde{B}}(y) = 1
\end{equation}
for every $y \in Y$, ensuring that every $x \in X$ is an element of at least one $y \in \mathcal{P}(X)$.

\end{enumerate}

In this section, we delineate the methodology for constructing new fuzzy sets utilizing principles analogous to those in ZFC set theory. Let $\tilde{A}$ be a fuzzy set defined on a universe $X$, and let $Y$ be another universe. We can construct a new fuzzy set, denoted by $\tilde{B}$, defined on $Y$ by adhering to the following construction rules:

\begin{enumerate}[{\textbf{Rule}} 1.]
\item The set $Y$ must be an element of the superstructure over $X$, denoted as $S(X)$, implying that $Y$ can be any classical set contained within $S(X)$.
\item The membership value $\mu_{\tilde{B}}(y)$ is defined to equal $\mu_{\tilde{A}}(x)$ if $y = x$; however, the converse does not necessarily hold.
\item If $Y$ includes the empty set, the membership function value at the empty set is always equal to 1: $\mu_{\tilde{B}}(\varnothing) \equiv 1$.
\item The membership function of the fuzzy set $\tilde{B}$ is given by
\begin{equation}\label{rule4}
\mu_{\tilde{B}}(y) = \prod_{x \in y} \left(2^{\mu_{\tilde{A}}(x)} - 1\right),
\end{equation}
where $y$ is any element contained within $Y$.
\end{enumerate}

We can interpret the rules described above as follows:

Rule 1 delineates the permissible universe for constructing a new fuzzy set from an existing one. For example, if the fuzzy set $\tilde{A}$ is defined on the universe $X = \{x_1, x_2\}$, we can construct another fuzzy set $\tilde{B}$ from $\tilde{A}$ on the universe $Y = \{\varnothing, x_1, \{x_2\}, \{x_1, \{x_1, x_2\}\}\}$. This is feasible because the element $x_1$ belongs to $S_0(X)$, and the elements $\varnothing, \{x_2\}, \{x_1, x_2\}$, and $\{x_1, \{x_1, x_2\}\}$ belong to $S_1(X)$ and $S_2(X)$, respectively.

Rule 2 asserts that the membership value remains unchanged for the same element when using this construction method.

Rule 3 is justified because the empty set, being the only set devoid of elements, is a subset of every set and cannot be partitioned.

Rule 4 provides the formula for calculating the new membership values. It is evident that Rule 3 is a special case of Rule 4, as it represents an empty product. Even if the universe of the new fuzzy set is complex, we can iteratively apply formula \eqref{rule4} until the information of the original fuzzy set is directly substituted into this formula.

Next, we will demonstrate the compatibility of the aforementioned rules with certain axioms of ZFC, as previously mentioned.

The fuzzy interpretation of the axiom of pairing, as expressed by formula \eqref{pairing}, can be derived using the rules presented above. Specifically, we have:
\begin{equation}
\mu_{\tilde{C}}(X) = \prod_{x \in X} \left(2^{\mu_{\tilde{A}}(x)} - 1\right) = \prod_{x \in X} \left(2^1 - 1\right) = 1,
\end{equation}
and similarly,
\begin{equation}
\mu_{\tilde{C}}(Y) = \prod_{y \in Y} \left(2^{\mu_{\tilde{B}}(y)} - 1\right) = \prod_{y \in Y} \left(2^1 - 1\right) = 1.
\end{equation}

Likewise, we can derive the fuzzy interpretation of the axiom of power set, as given by formula \eqref{power}:
\begin{equation}
\mu_{\tilde{B}}(y) = \left\lbrace
\begin{aligned}
&1,\quad y = \varnothing;\\[2mm]
&\prod_{x \in y}\left(2^{\mu_{\tilde{A}}(x)} - 1\right) = \prod_{x \in y}\left(2^1 - 1\right) = 1,\quad y \neq \varnothing.
\end{aligned}
\right.
\end{equation}

The derivation of the axiom of union differs slightly from the previous two axioms. In this case, we utilize the membership function of the fuzzy set $\tilde{A}$, as provided by formula \eqref{union}. Specifically, $\mu_{\tilde{A}}(x) = 1$ holds if and only if for every $y \in x$, we have $\mu_{\tilde{B}}(y) = 1$.

Formula (\ref{rule4}) is consistent with classical set theory, which asserts that an element either belongs to a set or does not. If all $x$ belong to a classical set, denoted as $\tilde{A}$, i.e., $\mu_{\tilde{A}}(x) \equiv 1$ for all $x$, then formula (\ref{rule4}) implies that $\mu_{\tilde{B}}(y) \equiv 1$ for any $y$. Conversely, if $\mu_{\tilde{A}}(x) = 0$ for some $x$, then formula (\ref{rule4}) implies that $\mu_{\tilde{B}}(y) = 0$ for any expression of $y$ that contains $x$. This outcome is logical, as a positive membership value cannot be derived from null.

\begin{Theo}\label{cardinality-power-set}
Let $\tilde A$ be a fuzzy set defined on a finite set $X$, where each element $x \in X$ is associated with its membership degree $\mu_{\tilde A}(x)$. Construct a fuzzy set $\tilde B$ from $\tilde A$, defined on the power set of $X$, such that
\begin{equation}
\tilde B=\left\{\big(y, \mu_{\tilde B}(y)\big)\middle| y \in \mathcal{P}(X) \right\}.
\end{equation}
The cardinality of the fuzzy set $\tilde B$ is then given by:
\begin{equation}\label{cardinality-powerset}
\text{card}(\tilde B)=2^{\text{card}(\tilde A)}.
\end{equation}
\end{Theo}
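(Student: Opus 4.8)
The plan is to compute $\text{card}(\tilde B)$ directly from the definition of scalar cardinality together with Rule 4. By definition,
\begin{equation}
\text{card}(\tilde B)=\sum_{y\in\mathcal P(X)}\mu_{\tilde B}(y)=\sum_{y\subseteq X}\ \prod_{x\in y}\left(2^{\mu_{\tilde A}(x)}-1\right),
\end{equation}
where the term for $y=\varnothing$ is the empty product, equal to $1$, consistent with Rule 3. Setting $a_x:=2^{\mu_{\tilde A}(x)}-1$ for each $x\in X$, the task reduces to evaluating $\sum_{y\subseteq X}\prod_{x\in y}a_x$.

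The key step is the elementary ``sum over subsets of a product'' identity
\begin{equation}\label{subset-sum-identity}
\sum_{y\subseteq X}\ \prod_{x\in y}a_x=\prod_{x\in X}\bigl(1+a_x\bigr),
\end{equation}
which I would establish by induction on $|X|$ (the base case $X=\varnothing$ giving $1=1$, and the inductive step splitting the subsets of $X\cup\{x_0\}$ into those containing $x_0$ and those not, which factors out $(1+a_{x_0})$). Alternatively, one may simply observe that expanding the right-hand product by the distributive law produces exactly one monomial $\prod_{x\in y}a_x$ for each choice of subset $y\subseteq X$. Finiteness of $X$ is what makes both the product and the expansion legitimate.

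Substituting $a_x=2^{\mu_{\tilde A}(x)}-1$ into \eqref{subset-sum-identity} gives $1+a_x=2^{\mu_{\tilde A}(x)}$, so
\begin{equation}
\text{card}(\tilde B)=\prod_{x\in X}2^{\mu_{\tilde A}(x)}=2^{\sum_{x\in X}\mu_{\tilde A}(x)}=2^{\text{card}(\tilde A)},
\end{equation}
using the law of exponents and the definition $\text{card}(\tilde A)=\sum_{x\in X}\mu_{\tilde A}(x)$. This completes the proof. I do not anticipate a genuine obstacle here; the only points requiring care are the bookkeeping of the empty-set term (handled by the empty-product convention and consistency with Rule 3) and making sure the exponent-splitting $\prod 2^{\mu_{\tilde A}(x)}=2^{\sum \mu_{\tilde A}(x)}$ is invoked only over the finite index set $X$. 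It may also be worth remarking that \eqref{cardinality-powerset} specializes to the classical identity $|\mathcal P(X)|=2^{|X|}$ when $\mu_{\tilde A}\equiv 1$, which serves as a sanity check.
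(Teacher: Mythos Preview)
Your argument is correct and is essentially the paper's own proof: both reduce to the identity $\sum_{y\subseteq X}\prod_{x\in y}a_x=\prod_{x\in X}(1+a_x)$ established by induction on $|X|$, with $a_x=2^{\mu_{\tilde A}(x)}-1$. The only difference is packaging---you isolate and name the subset-sum identity first, whereas the paper carries out the same inductive computation with all the $2^{\mu_{\tilde A_{k+1}}(x_i)}-1$ factors written out explicitly before collapsing them.
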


\begin{proof}
Consider $X_n=\{x_1, \dots, x_n\}$ as a finite set, and let $\tilde A_n$ be a fuzzy set defined on $X_n$, where $n$ is the cardinality of $X_n$. The proof will proceed by induction on $n$.

For the base case, let $n=1$. Here, $X_1=\{x_1\}$ is a single-element set, thus its power set is $\mathcal{P}(X_1)=\{\varnothing, \{x_1\}\}$. We can construct a fuzzy set $\tilde B_1$ on $\mathcal{P}(X_1)$ as follows:
\begin{equation}
\tilde B_1=\displaystyle\frac{1}{ \varnothing }+\frac{ \mu_{\tilde B_1}(\{x_1\})}{ \{x_1\} }.
\end{equation}
The cardinality of the fuzzy set $\tilde B_1$ is therefore
\begin{equation}
\text{card}(\tilde B_1)=1+ \mu_{\tilde B_1}(\{x_1\})=1+2^{ \mu_{\tilde A_1}(x_1)}-1=2^{ \mu_{\tilde A_1}(x_1)}=2^{\text{card}({\tilde A_1})}.
\end{equation}

For the induction step, assume that for a fuzzy set $\tilde B_k$ defined on the power set of $X_k$, the relation $\text{card}(\tilde B_k)=2^{\text{card}({\tilde A_k})}$ holds for $n=k$. We need to show that
\begin{equation}
\text{card}(\tilde B_{k+1})=2^{\text{card}({\tilde A_{k+1}})}.
\end{equation}

Given $X_{k+1}=X_k\cup \{x_{k+1}\}$, we have
\begin{equation}
\begin{aligned}
&\mathcal{P}(X_{k+1})\\
=&\mathcal{P}(X_k)\cup \{\{x_{k+1}\}\}\\
&\cup\{\{x_1,x_{k+1}\},\{x_2,x_{k+1}\},\cdots,\{x_k,x_{k+1}\}\}\\
&\cup\{\{x_1,x_2,x_{k+1}\},\{x_1,x_3,x_{k+1}\},\cdots,\{x_{k-1},x_k,x_{k+1}\}\}\\
&\cup\cdots\\
&\cup\{\{x_1,x_2,\cdots,x_k,x_{k+1}\}\}.
\end{aligned}
\end{equation}

Thus, the cardinality of the fuzzy set $\tilde B_{k+1}$ is computed as follows:
\begin{align*}
&\text{card}(\tilde B_{k+1})\\
=&2^{\text{card}({\tilde A_k})}+2^{\mu_{\tilde A_{k+1}}(x_{k+1})}-1\\
+\!&\left(2^{\mu_{\tilde A_{k+1}}(x_1)} \!-\!1 \right)\!\!\left(2^{\mu_{\tilde A_{k+1}}(x_{k+1})}\!-\!1 \right)\!+\!\left(2^{\mu_{\tilde A_{k+1}}(x_2)} \!-\!1 \right)\!\!\left(2^{\mu_{\tilde A_{k+1}}(x_{k+1})}\!-\!1 \right)\\
+& \cdots+\left(2^{\mu_{\tilde A_{k+1}}(x_k)} \!-\!1 \right)\!\!\left(2^{\mu_{\tilde A_{k+1}}(x_{k+1})}\!-\!1 \right)\\
+&\left(2^{\mu_{\tilde A_{k+1}}(x_1)} \!-\!1 \right)\!\!\left(2^{\mu_{\tilde A_{k+1}}(x_2)} \!-\!1 \right)\!\!\left(2^{\mu_{\tilde A_{k+1}}(x_{k+1})}\!-\!1 \right)+\cdots\\
+&\left(2^{\mu_{\tilde A_{k+1}}(x_{k-1})} \!-\!1 \right)\!\!\left(2^{\mu_{\tilde A_{k+1}}(x_k)} \!-\!1 \right)\!\!\left(2^{\mu_{\tilde A_{k+1}}(x_{k+1})}\!-\!1 \right)+\cdots\\
+&\left(2^{\mu_{\tilde A_{k+1}}(x_1)} \!-\!1 \right)\!\!\left(2^{\mu_{\tilde A_{k+1}}(x_2)} \!-\!1 \right)\cdots\left(2^{\mu_{\tilde A_{k+1}}(x_k)} \!-\!1 \right)\!\!\left(2^{\mu_{\tilde A_{k+1}}(x_{k+1})}\!-\!1 \right)\\
=&2^{\text{card}({\tilde A_k})} +   \text{card}(\tilde B_k) \left( 2^{\mu_{\tilde A_{k+1}}(x_{k+1})}-1\right)\\
=&2^{\text{card}({\tilde A_k})}2^{\mu_{\tilde A_{k+1}}(x_{k+1})}=2^{\text{card}({\tilde A_{k+1}})}.\\
\end{align*}
This completes the proof.
\end{proof}

\section{Constructing Fuzzy Sets from Classical Sets}

In this section, we address a pivotal issue in the domain of fuzzy set theory: the objective determination of arbitrary membership values. Elements within the superstructure over a singleton set are instrumental in this context. For notational convenience, a single element $x$ and its corresponding singleton set $\lbrace x \rbrace$ are denoted as $\{x \}^{(0)}$ and $\{x \}^{(1)}$, respectively. Furthermore, if we consider a classical set $\lbrace y \rbrace$ and set $x=\lbrace y \rbrace$, the element $y$ is represented by $\lbrace x \rbrace^{(-1)}$.

More generally, an element $x$ in nested braces is defined as $\{x \}^{(n)}:=\{\{x\}^{(n-1)}\}$ and $\{x \}^{(-n)}:=\{\{x\}^{(1-n)}\}^{(-1)}$, where $n\geq 1$. For example, $\{x \}^{(3)}$ is an abbreviation for $\{\{\{x \}\}\}$, and $\{x \}^{(-3)}$ denotes $\{\{\{x \}^{(-1)}\}^{(-1)}\}^{(-1)}$. Clearly, by setting $y=\{x \}^{(-3)}$, we derive that $x=\{\{\{{y }\}\}\}$.\\

We present a theorem pertaining to sets with nested braces:
\begin{Theo}
In the construction of new fuzzy sets, the following three expressions are interchangeable for any element $x$ in a classical set, where $m$ and $n$ are integers:
\begin{equation}
\left\lbrace\lbrace x \rbrace^{(m)}\right\rbrace^{(n)}, \quad \left\lbrace\lbrace x \rbrace^{(n)}\right\rbrace^{(m)},\quad \lbrace x \rbrace^{(m+n)}.
\end{equation}
\end{Theo}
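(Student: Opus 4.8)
The plan is to reduce the claimed three-way interchangeability to the single ``additivity of brace depth'' identity
\[
\left\{\{x\}^{(m)}\right\}^{(n)} \;=\; \{x\}^{(m+n)}\qquad(m,n\in\mathbb{Z}),
\]
understood to hold whenever the expressions occurring in it are defined. Granting this, the interchangeability of $\left\{\{x\}^{(m)}\right\}^{(n)}$ with $\left\{\{x\}^{(n)}\right\}^{(m)}$ is immediate, since both then equal $\{x\}^{(m+n)}=\{x\}^{(n+m)}$; and since all three expressions denote one and the same set, applying Rules 1--4 with any of them substituted for another (as the universe $Y$, or as an element of $Y$) yields literally the same fuzzy set $\tilde B$ with the same membership function, which is exactly what ``interchangeable in the construction of new fuzzy sets'' means.

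First I would record two elementary facts straight from the recursive definitions. (a) The map $z\mapsto\{z\}$ is injective by the Axiom of Extensionality, hence $\big\{\{z\}\big\}^{(-1)}=z$ holds with no hypothesis, while $\big\{\{z\}^{(-1)}\big\}=z$ holds whenever $\{z\}^{(-1)}$ is defined, i.e.\ whenever $z$ is a singleton; in short, a ``peel'' exactly undoes a ``wrap''. (b) For every integer $n$, $\{z\}^{(0)}=z$ and $\{z\}^{(n+1)}=\big\{\{z\}^{(n)}\big\}$: for $n\ge 0$ this is the definition, and for $n<0$ it follows by unwinding $\{\cdot\}^{(-k)}:=\big\{\{\cdot\}^{(1-k)}\big\}^{(-1)}$ together with (a). Informally: $\{x\}^{(n)}$ is $x$ wrapped in $n$ further pairs of braces when $n\ge 0$ and $x$ with $-n$ pairs peeled off when $n<0$.

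With (a) and (b) in hand, I would prove the additivity identity by fixing $x$ and $m$ and inducting on $n$ in both directions. The base case $n=0$ is (b). For the step from $n$ to $n+1$, fact (b) gives $\left\{\{x\}^{(m)}\right\}^{(n+1)}=\Big\{\left\{\{x\}^{(m)}\right\}^{(n)}\Big\}=\big\{\{x\}^{(m+n)}\big\}=\{x\}^{(m+n+1)}$; for the step from $n$ to $n-1$, apply $\{\cdot\}^{(-1)}$ to both sides of the case-$n$ identity and cancel one brace using (a). I expect the only real obstacle to be the bookkeeping of well-definedness in the mixed-sign case: when $m$ and $n$ have opposite signs the left-hand side first wraps and then peels (or vice versa), and one must verify that every occurrence of $\{\cdot\}^{(-1)}$ is applied to a genuine singleton and that the two sides share the same domain of validity. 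This is precisely where injectivity of $z\mapsto\{z\}$ from extensionality does the work, guaranteeing that a brace peeled off immediately after being added returns the original object; the remainder is routine unwinding of the recursion.
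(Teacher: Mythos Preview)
Your argument is correct, but it proceeds along a genuinely different line from the paper's. You establish the purely set-theoretic identity $\left\{\{x\}^{(m)}\right\}^{(n)}=\{x\}^{(m+n)}$ by induction on $n$ (using that wrapping and peeling are inverse operations via Extensionality), and then observe that literal equality of sets trivially yields equal membership values under Rules~1--4. The paper, by contrast, does not argue that the three expressions are the same set; instead it fixes $\mu_{\tilde A}(x)=u$, and for the mixed-sign case explicitly computes $\mu_{\tilde B}$ at each of the three expressions using the recursions $t\mapsto 2^{t}-1$ and $t\mapsto\log_2(t+1)$ (these are the content of Rules~4--6), then checks by a case split on the sign of $m+n$ that the three towers of exponentials and logarithms collapse to the same value. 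Your route is more elementary and self-contained---it needs no membership-function machinery, and in particular avoids the paper's forward reference to Rules~5--6, which are only introduced after the theorem. The paper's computation, on the other hand, makes the operational content explicit: it shows directly that the membership-assignment rules themselves are insensitive to which of the three forms is used, which is arguably the intended meaning of ``interchangeable in the construction of new fuzzy sets.'' Your caveat about well-definedness in the mixed-sign case (that each application of $\{\cdot\}^{(-1)}$ must hit a genuine singleton) is the right thing to flag; the paper's proof sidesteps this by working formally with membership values rather than with the sets themselves.
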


\begin{proof}
The statement is evident when $m$ and $n$ share the same sign; thus, we focus on the scenario where $m$ and $n$ have opposite signs. Without loss of generality, assume $m > 0$ and $n < 0$. Let $\tilde A$ be a fuzzy set defined on the universe $X$, with $x \in X$ and $\mu_{\tilde A}(x) = u \in [0, 1]$. A new fuzzy set, $\tilde B$, can be constructed from $\tilde A$ by including $\lbrace\lbrace x \rbrace^{(m)}\rbrace^{(n)}$, $\lbrace\lbrace x \rbrace^{(n)}\rbrace^{(m)}$, and $\lbrace x \rbrace^{(m+n)}$ in its universe.

By applying Rule 4 and Rule 5, we derive the following equations:

\begin{align*}
&\mu_{\tilde B}\left\lbrace\lbrace x \rbrace^{(m)}\right\rbrace^{(n)}\\
=&\underbrace {\log_2\left(\log_2\left(\cdots\log_2\left(\left(\underbrace {2^{2^{^{\udots^{2^u-1}}}-1}-1}_{m{\text{ times}}\;2^{^{(\cdot)}}-1}\right)+1 \right)\cdots+1\right)+1\right)}_{n{\text{ times}} \log_2(\cdot)+1}\\
=&\left\lbrace 
\begin{aligned}
&u,\quad \text{if}\;\;m+n=0;\\[2mm]
&\underbrace {2^{2^{^{\udots^{2^u-1}}}-1}-1}_{m+n{\text{ times}}\;2^{^{(\cdot)}}-1},\quad \text{if}\;\;m+n>0;\\[2mm]
&\underbrace {\log_2\left(\log_2\left(\cdots\log_2(u+1)\cdots+1\right)+1\right)}_{|m+n|{\text{ times}} \log_2(\cdot)+1},\quad \text{if}\;\;m+n<0
\end{aligned}
 \right.
\end{align*}
and
\begin{align*}
&\mu_{\tilde B}\left\lbrace\lbrace x \rbrace^{(n)}\right\rbrace^{(m)}\\
=&\underbrace {2^{\displaystyle 2^{^{\udots^{\displaystyle 2^{\displaystyle\underbrace {\log_2\left(\log_2\left(\cdots\log_2\left(u+1 \right)\cdots+1\right)+1\right)}_{n{\text{ times}} \log_2(\cdot)+1}}-1}}}-1}-1}_{m{\text{ times}}\;2^{^{(\cdot)}}-1}  \\
=&\left\lbrace 
\begin{aligned}
&u,\quad \text{if}\;\;m+n=0;\\[2mm]
&\underbrace {2^{2^{^{\udots^{2^u-1}}}-1}-1}_{m+n{\text{ times}}\;2^{^{(\cdot)}}-1},\quad \text{if}\;\;m+n>0;\\[2mm]
&\underbrace {\log_2\left(\log_2\left(\cdots\log_2(u+1)\cdots+1\right)+1\right)}_{|m+n|{\text{ times}} \log_2(\cdot)+1},\quad \text{if}\;\;m+n<0.
\end{aligned}
 \right.
\end{align*}

In addition, Rule 2 can be applied to yield:
\begin{align*}
&\mu_{\tilde B}\left(\lbrace x \rbrace^{(m+n)}\right)\\
=&\left\lbrace
\begin{aligned}
&\mu_{\tilde B}\left(\lbrace x \rbrace^{(0)}\right)=\mu_{\tilde B}(x)=\mu_{\tilde A}(x)=u,\quad \text{if}\;\;m=n;\\[2mm]
&\underbrace {2^{2^{^{\udots^{2^u-1}}}-1}-1}_{m+n{\text{ times}}\;2^{^{(\cdot)}}-1},\quad \text{if}\;\;m+n>0;\\[2mm]
&\underbrace {\log_2\left(\log_2\left(\cdots\log_2(u+1)\cdots+1\right)+1\right)}_{|m+n|{\text{ times}} \log_2(\cdot)+1},\quad \text{if}\;\;m+n<0.
\end{aligned}
\right.
\end{align*}
These results demonstrate the substitution of the three expressions.
\end{proof}

We now delve into the methodology of constructing fuzzy sets from classical sets. To begin, consider a binary sequence composed of 1's and 0's, denoted as:
\begin{equation}\label{a-binary-sequence}
\mathbf{a}=(a_{m^*},a_{m^*+1},a_{m^*+2},\dots), m^*\leq 0.
\end{equation}

Here, we assume $a_{m^*} \equiv 1$ and $a_0 \equiv 1$, with other elements of $\mathbf{a}$ taking values either $0$ or $1$. For each $k \geq m^*$, we construct a classical set $X_{\mathbf{a}} \in S({x})$ based on $\mathbf{a}$: if $a_k = 1$, then the set $\{x\}^{(k)}$ is included in $X_{\mathbf{a}}$, whereas if $a_k = 0$, it is not. Notably, $X_{\mathbf{a}}$ always includes the element $x$ since $a_0 \equiv 1$. To simplify notation, we replace $a_0$ with a vertical line and use finite binary sequences to represent sequences with infinite 0's. For example, $X_{(1,0\; |\; 1,0,1,1)}$ denotes the set:
\begin{equation}
\left\{\{x\}^{(-2)},x, \{x\}^{(1)},\{x\}^{(3)},\{x\}^{(4)} \right\}.
\end{equation}

To construct fuzzy sets from classical sets, adherence to an additional important rule is required.\\

\noindent \textbf{Rule 5.} A fuzzy set $\tilde{A}$ can be constructed from any singleton set $\lbrace x \rbrace$ and any binary sequence $\mathbf{a}$ generated by \eqref{a-binary-sequence}, defined on $X_{\mathbf{a}}$. The scalar cardinality of $\tilde{A}$ is equal to that of the singleton set $\lbrace x \rbrace$, which is one.

Rule 5 implies the conservation of cardinality. The rationale is that during the construction process of the fuzzy set $\tilde{A}$, no element other than $x$ is involved. Consequently, only the classical set $\lbrace x \rbrace$ contributes to the cardinality of $\tilde{A}$. Thus, the cardinality of the fuzzy set $\tilde{A}$ remains unchanged compared to $\lbrace x \rbrace$.

In fact, Rule 5 is more fundamental than other rules, as the other rules can only be applied when a fuzzy set already exists. Therefore, the primary function of Rule 5 is to facilitate the initial construction of a fuzzy set from a classical one. Consequently, we assert that a variant of formula \eqref{rule4} is effective in the process of constructing a fuzzy set from a classical singleton set:\\

\noindent \textbf{Rule 6.} When constructing a fuzzy set $\tilde{A}$ from a singleton set $\lbrace x \rbrace$ using Rule 5, the following equations can be derived:
\begin{equation}\label{recursion1}
\mu_{\tilde{A}}\left(\lbrace x \rbrace^{(m)} \right) = 2^{\mu_{\tilde{A}}\left(\lbrace x \rbrace^{(m-1)} \right)} - 1,
\end{equation}
or equivalently,
\begin{equation}\label{recursion2}
\mu_{\tilde{A}}\left(\lbrace x \rbrace^{(m)} \right) = \log_2\left(\mu_{\tilde{A}}\left(\lbrace x \rbrace^{(m+1)} \right) + 1\right),
\end{equation}
where $m$ is an arbitrary integer.\\

Equations \eqref{recursion1} and \eqref{recursion2} involve a slight abuse of notation, as $\lbrace x \rbrace^{(m)}$ might not be included in the universe of the fuzzy set $\tilde{A}$ for certain values of $m$. However, this does not hinder our objective, which is to compute the membership value recursively using \eqref{recursion1} or \eqref{recursion2}. The notation $\lbrace x \rbrace^{(m)}$, even if not included in the universe, only appears in intermediate calculations.

We will now illustrate how to represent and calculate an arbitrary membership value using our rules. The specific methods for this calculation are detailed in the proof of the following theorem.

\begin{Theo}\label{core-theorem}
Given a binary sequence $\mathbf{a}$ defined by equation \eqref{a-binary-sequence}, there exists exactly one fuzzy set $\tilde A$ constructed from a single point set $\lbrace x \rbrace$ and defined on $X_{\mathbf{a}}$. Conversely, for any real number $0<w\leq1$, there exists a binary sequence $\mathbf{a}$ such that the fuzzy set $\tilde A$ constructed from $\lbrace x \rbrace$ and defined on $X_{\mathbf{a}}$ satisfies $\mu_{\tilde A}(x)=w$.
\end{Theo}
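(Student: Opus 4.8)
The plan is to reduce both halves of the statement to the analysis of a single real function built from the map $f(t)=2^{t}-1$ and its inverse $g(t)=\log_{2}(t+1)$; note that $f$ and $g$ are strictly increasing bijections of $[0,1]$. For $k\in\mathbb Z$ let $\phi_{k}$ be the $k$-fold iterate of $f$ (so $\phi_{0}=\mathrm{id}$, $\phi_{k}$ is the $k$-fold iterate of $f$ for $k>0$, and the $|k|$-fold iterate of $g$ for $k<0$). By Rule~6 the membership values satisfy $\mu_{\tilde A}(\{x\}^{(k)})=\phi_{k}(u)$ for every $k$, where $u:=\mu_{\tilde A}(x)$; in particular $\tilde A$ is completely determined by the single number $u$, and all these values automatically lie in $[0,1]$. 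Rule~5 (conservation of cardinality) then amounts to the single scalar equation
\[
S_{\mathbf a}(u):=\sum_{k:\,a_{k}=1}\phi_{k}(u)=1 ,
\]
so the first assertion is equivalent to: this equation has exactly one solution $u\in(0,1]$, and the second to: every $w\in(0,1]$ arises as such a solution for a suitable $\mathbf a$.

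For the first assertion I would verify that $S_{\mathbf a}$ is well defined, continuous and strictly increasing on $(0,1)$, with $S_{\mathbf a}(0^{+})=0$ and $\lim_{u\to1^{-}}S_{\mathbf a}(u)\ge1$. Convergence of the (possibly infinite) series and continuity follow from the elementary bound $\phi_{k}(u)\le\rho(u)^{k}u$ for $k\ge0$, where $\rho(u)=(2^{u}-1)/u<1$ on $(0,1)$ because $s\mapsto(2^{s}-1)/s$ is increasing (convexity of $2^{s}$); only finitely many negative indices occur since $m^{*}\le0$ is fixed. Strict monotonicity holds because each $\phi_{k}$ is strictly increasing and the term $\phi_{0}(u)=u$ is always present ($a_{0}\equiv1$), which also forces $\lim_{u\to1^{-}}S_{\mathbf a}(u)\ge1$, while $S_{\mathbf a}(u)\to0$ as $u\to0^{+}$ follows from the same geometric bound. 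The intermediate value theorem plus strict monotonicity then yield a unique $u\in(0,1]$, and propagating it through Rule~6 recovers the unique $\tilde A$.

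For the converse, fix $w\in(0,1]$; it suffices to produce an index set $T\subseteq\mathbb Z$, bounded below, with $0\in T$ and $\sum_{k\in T}\phi_{k}(w)=1$, since $\mathbf a$ is then read off from $T$ with $m^{*}=\min T$. Three facts drive the construction: (i) $\phi_{k}(w)\downarrow0$ as $k\to+\infty$ and $\phi_{k}(w)\uparrow1$ as $k\to-\infty$ (for $w<1$); (ii) writing $\Sigma(v):=\sum_{k\ge0}\phi_{k}(v)$, the bound $\Sigma(v)\ge v/(1-\ln2)$, again from $(2^{s}-1)/s\ge\ln2$; and (iii) the resulting ``sub-completeness'' inequality $\phi_{k}(w)\le\sum_{j>k}\phi_{j}(w)$ (and its variant with the index $j=0$ excluded), since $\tfrac{\ln2}{1-\ln2}>1$, which guarantees that a greedy selection among a decreasing family of the $\phi_{j}(w)$'s can attain any prescribed value up to the total mass exactly, not merely in the limit. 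Concretely, let $u^{*}$ be defined by $\Sigma(u^{*})=1$; by (ii), $u^{*}\le1-\ln2$. If $w\ge u^{*}$ I take $m^{*}=0$ and greedily include terms of $\phi_{1}(w),\phi_{2}(w),\dots$ that do not push the running sum past $1$; the available mass is $\Sigma(w)-w\ge1-w$, so by (iii) the sum converges to exactly $1$. If $w<u^{*}$ I choose $m^{*}=-N$ with $N$ minimal such that $\sum_{k\ge-N}\phi_{k}(w)\ge1$, check via $u^{*}\le1-\ln2$ that $\phi_{-N}(w)\le1-w$ (so the forced terms $\phi_{-N}(w)$ and $\phi_{0}(w)=w$ together do not overshoot), and then greedily fill the residual $1-\phi_{-N}(w)-w$ from the remaining $\phi_{k}(w)$ with $k>-N$, $k\ne0$, using (iii) once more.

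The convergence, monotonicity and limit computations for $S_{\mathbf a}$ are routine; the real obstacle is the converse. Its heart is the combination of two points: showing that the greedy procedure attains the value $1$ exactly rather than skipping over it — precisely what the sub-completeness inequality in (iii) forbids — and placing the left endpoint $m^{*}$ in the narrow regime where enough mass still lies to the right of $m^{*}$ yet $\phi_{m^{*}}(w)$ has not already grown too large. Both of these are controlled by the single elementary estimate $(2^{s}-1)/s\ge\ln2$ on $(0,1]$, which simultaneously supplies the geometric tail bound, the sub-completeness property, and the quantitative ceiling $u^{*}\le1-\ln2$.
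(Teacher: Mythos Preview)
Your argument is correct and follows the paper's overall strategy: for uniqueness, both reduce Rule~5 to showing that $t\mapsto\sum_k a_k u_k(t)$ is continuous, strictly increasing, zero at $t=0$, and eventually exceeds~$1$; for the converse, both run a greedy selection of indices. The substantive difference is in the engine that keeps the greedy step from stalling. The paper proves the local two-term inequality $u_{k+1}(w)+u_k(w)>u_{k-1}(w)$ (equivalently $2^{2^\xi-1}+2^\xi-\xi-2>0$ for $\xi\in(0,1)$) and organizes the construction around the sign-change index $n_0$ of $k\mapsto u_k(w)+w-1$, then recursively finds $n_1,n_2,\ldots$; this produces the binary sequence term by term with no preliminary case split. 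You instead invoke the global Kakeya-type sub-completeness $\phi_k(w)\le\sum_{j>k}\phi_j(w)$, derived from the single estimate $(2^s-1)/s\ge\ln 2$, and split at the threshold $u^{*}$ solving $\sum_{k\ge0}\phi_k(u^{*})=1$. Your route is more conceptual and makes the role of the constant $\ln 2$ transparent---it simultaneously supplies the geometric tail bound, the sub-completeness, and the ceiling $u^{*}\le 1-\ln 2$ that prevents the forced terms $\phi_{m^{*}}(w)+w$ from overshooting. The paper's route is more explicitly algorithmic and sidesteps the need to verify the Kakeya condition separately on the gapped sequence $(\phi_{-N+1},\ldots,\phi_{-1},\phi_1,\phi_2,\ldots)$, which in your approach requires the extra ``variant with $j=0$ excluded'' that you mention.
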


\begin{proof}
Consider a fuzzy set $\tilde{A}$ constructed from a single point set $\lbrace x \rbrace$. This fuzzy set $\tilde{A}$ is defined on $X_{\mathbf{a}}$, where $\mathbf{a} = (a_{m^*}, a_{m^*+1}, a_{m^*+2}, \dots), m^* \leq 0$. Additionally, $a_{m^*} \equiv 1$, $a_0 \equiv 1$, and the remaining $a_k$ values are either 0 or 1. To define recursive functions, we set
\begin{equation}
u_k(t) = \mu_{\tilde{A}}\left(\lbrace x \rbrace^{(k)}\right), \quad u_0(t) = \mu_{\tilde{A}}\left(x\right) = t \in [0,1].
\end{equation}
It is evident that $u_k(t)$ lies between 0 and 1 for any integer $k$.

\begin{enumerate}[(a)]
\item Given a binary sequence $\mathbf{a}$, the scalar cardinality of the corresponding fuzzy set $\tilde A$ can be computed using the following series:
\begin{equation}
G_{\mathbf{a}}(t):=\text{card}(\tilde A)=\sum_{k=m^*}^\infty a_k u_k(t).
\end{equation}
To demonstrate the first part of this theorem, we will divide the proof into three distinct cases.
\begin{enumerate}[I.]
\item 

 If $\mathbf{a}=(|)$, that is, $X_{\mathbf{a}}=\left\lbrace\{x\}^{(0)}\right\rbrace = \{x\}$,  then we have $G_{(|)}(t)=t$. Thus by Rule 5, $t=1$ is the only real number  satisfying the equation $G_{(|)}(t)=1$. Hence $\tilde A$ uniquely exists and degenerates into the classical set $\{x\}$. 
 
\item If $\mathbf{a}$ is finite, say, $\mathbf{a}= (a_{m^*},a_{m^*+1},\dots, a_N)$, $N\geq \max (m^*+1,0)$. Then the cardinality of fuzzy set $\tilde A$ defined on $X_{\mathbf{a}}$ is 
\begin{equation}
G_{\mathbf{a}}(t)=\sum_{k=m^*}^N a_k  u_k(t).
\end{equation}
It is easy to verify that
\begin{equation}\label{opposite-sign}
\big(G_{\mathbf{a}}(0)-1\big)\big(G_{\mathbf{a}}(1)-1\big)=-M<0,
\end{equation}
where $M$ is the number of $1$'s (not counting $a_0$) in binary sequence $\mathbf{a}$. Moreover, we have
\begin{equation}
\begin{aligned}
&\frac{\ud}{\ud t}\big(G_{\mathbf{a}}(t)-1\big)\\
=&\left\lbrace 
\begin{aligned}
&1+\sum_{k=1 }^{N }a_k(\ln  2)^k\cdot  2^{ u_0(t)+\cdots + u_{k-1}(t)}>0,\quad\text{if}\; m^* =0;\\[2mm]
&1+\sum_{k=m^* }^{-1 }a_k(\ln  2)^k\cdot 2^{- u_{-1}(t)-\cdots- u_{k}(t) }>0,\quad\text{if}\; m^* \leq -1, N=0;\\[2mm]
&\begin{aligned}
1&+\sum_{k=m^* }^{-1 }a_k(\ln  2)^k\cdot 2^{- u_{-1}(t)-\cdots- u_{k}(t) }\\
&+\sum_{k=1 }^{N }a_k(\ln  2)^k\cdot  2^{ u_0(t)+\cdots +\ u_{k-1}(t)}>0,
\end{aligned}
\quad\text{if}\; m^* \leq -1, N\geq 1.
\end{aligned}
\right.
\end{aligned}
\end{equation}

Hence $G_{\mathbf{a}}(t) - 1$ is strictly increasing on the interval $[0, 1]$. Combined with \eqref{opposite-sign}, it follows that $G_{\mathbf{a}}(t) - 1 = 0$ has exactly one root between 0 and 1, which corresponds to the value of $\mu_{\tilde{A}}(x)$.


\item If $\mathbf{a}$ is infinite, the cardinality of fuzzy set $\tilde A$ defined on $X_{\mathbf{a}}$ is given by the positive series
\begin{equation}
G_{\mathbf{a}}(t)=\sum_{k=0}^\infty a_{n_k} u_{n_k}(t),
\end{equation}

where $n_0=m^*$ and the subsequence $a_{n_k}$ consists of all $a_k=1$ in $\mathbf{a}$.

First, we need to prove that $G_{\mathbf{a}}(t)$ is convergent for any real $t$ in the open interval $(0, 1)$. For fixed $t \in (0, 1)$, since the positive $u_k(t)$ is strictly monotonically decreasing with respect to $k$, we have:
\begin{equation}
\lim_{k \to \infty} u_{n_k}(t) = \lim_{k \to \infty} u_k(t) = 0.
\end{equation}
Applying d'Alembert's ratio test, one computes the limit
\begin{equation}
\lim_{k \to \infty} \frac{u_{k+1}(t)}{u_{k}(t)} = \ln 2 < 1.
\end{equation}
Noting that $G_{\mathbf{a}}(0) \equiv 0$, thus the series $G_{\mathbf{a}}(t)$, as a subseries of the positive series $\sum_{k=m^*}^\infty u_k(t)$, converges for any binary sequence $\mathbf{a}$ and any real $t \in [0, 1)$.

Next we show that $G_{\mathbf{a}}(t)$ is continuous at any point $t_0$ in the open interval $(0,1)$. Without loss of generality, assume that $m^*<0$, and observe that
\begin{equation}
\begin{aligned}
&|u_k(t)-u_k(t_0)| =u_k'(c_k)\cdot |t-t_0|\\
&\left\lbrace
\begin{aligned}
=& |t-t_0|,\quad && \text{if }\; k=0;\\[2mm]
=&2^{u_0(c_k)+\cdots +u_{k-1}(c_k)}(\ln  2)^k \cdot   |t-t_0|,\quad && \text{if }\; k>0;\\[2mm]
=&2^{-u_{-1}(d_k)-\cdots -u_{k}(d_k)}(\ln  2)^k \cdot   |t-t_0|,\quad && \text{if }\; k<0,
\end{aligned}
\right.
\end{aligned}
\end{equation}

where both $c_k$ and $d_k$ are between $t$ and $t_0$. Let  $c_k^*=u_0(c_k)+\cdots +u_{k-1}(c_k)$, $d_k^*=-u_{-1}(d_k)-\cdots -u_{k}(d_k)$, then  we have
\begin{equation}
\begin{aligned}
&|G_{\mathbf{a}}(t)-G_{\mathbf{a}}(t_0)|\\
=&\left|\sum_{k=0}^\infty u_{n_k}(t) - \sum_{k=0}^\infty u_{n_k}(t_0)  \right|\\
\leq & \sum_{k=0}^\infty \left|  u_{n_k}(t)  - u_{n_k}(t_0) \right|\\
\leq & \sum_{k=m^*}^{-1} (\ln 2)^{k} \cdot 2^{d_k^*}\cdot  |t-t_0|+ |t-t_0|+\sum_{k=1}^\infty (\ln 2)^{k} \cdot 2^{c_{k}^*} \cdot   |t-t_0|\\
\leq &\left(P+\sum_{k=0}^\infty  (\ln 2)^{k} \cdot 2^{c^*}\right) \cdot   |t-t_0|\\
=&\left(P+\frac{ 2^{c^*}}{1-\ln 2}\right) \cdot |t-t_0|,
\end{aligned}
\end{equation}
where $P=\sum_{k=m^*}^{-1} (\ln 2)^{k} \cdot 2^{d_k^*}+1$ is bounded, and $c^*=\sup\lbrace c_{n_k}^* \rbrace$. Hence,  $G_{\mathbf{a}}(t)$ is continuous on $[0,1)$.

Furthermore, the derivative of $G_{\mathbf{a}}(t)$ 
\begin{equation}
\begin{aligned}
\frac{\ud}{\ud t}G_{\mathbf{a}}(t) = 1&+\sum_{k=m^* }^{-1 }a_k(\ln  2)^k\cdot 2^{-u_{-1}(t)-\cdots-u_{k}(t) }\\
&+\sum_{k=1 }^{\infty }a_k(\ln  2)^k\cdot  2^{u_0(t)+\cdots +u_{k-1}(t)}>0.
\end{aligned}
\end{equation}
Thus, $G_{\mathbf{a}}(t)$ is strictly increasing on the interval $(0,1)$.

Now, because $G_{\mathbf{a}}(0) - 1 \equiv -1$ and $G_{\mathbf{a}}(t) - 1 \to +\infty$ as $t \to 1$, we can confirm that for any infinite binary sequence $\mathbf{a}$, the equation $G_{\mathbf{a}}(t) - 1 = 0$ has exactly one root between $0$ and $1$, which corresponds to the value of $\mu_{\tilde{A}}(x)$.

\end{enumerate}

\item Conversely, we provide an algorithm to find the binary sequence $\mathbf{a}$ for any given real number $w \in (0, 1]$ such that the fuzzy set $\tilde{A}$ generated from $\lbrace x \rbrace$ and defined on $X_{\mathbf{a}}$ satisfies $\mu_{\tilde{A}}(x) = w$.

If $w = 1$, the special binary sequence $(|)$ is what we are seeking.

If $0 < w < 1$, the following algorithm generates the binary sequence $\mathbf{a}= (a_{m^*},a_{m^*+1},a_{m^*+2},\dots)$.
\begin{enumerate}[I.]
\item Find the initial nonzero term. \\
Define the function $s_0(k) = u_k(w) + w - 1$, $k \neq 0$. For any fixed $w \in (0, 1)$, since $s_0(k)$ is strictly monotonically decreasing with respect to $k$ with $\lim_{k \to -\infty} s_0(k) = w > 0$ and $\lim_{k \to \infty} s_0(k) = w - 1 < 0$, there exists an integer $n_0$ such that $s_0(k) > 0$ if $k < n_0$ and $s_0(k) \leq 0$ if $k \geq n_0$. We set $m^* = n_0$ if $n_0 < 0$ and $m^* = 0$ if $n_0 > 0$. If $s_0(n_0) = 0$, this algorithm ends with a finite $\mathbf{a} = (a_{m^*}, \dots, |)$ if $n_0 < 0$ or $\mathbf{a} = (|, \dots, a_{n_0})$ if $n_0 > 0$. In both cases, except $a_{n_0} = 1$ and $a_0 = 1$, all other $a_k$'s (if they exist) in $\mathbf{a}$ are set to $0$. Otherwise, if $s_0(n_0) < 0$, proceed to Step II.

\item Find the second  nonzero term.\\
Define the function $s_1(k) = s_0(n_0) + u_k(w)$, $k > n_0$ and $k \neq 0$. For any fixed $w \in (0, 1)$, $s_1(k)$ is strictly monotonically decreasing with respect to $k$ since $u_k(w)$ is strictly monotonically decreasing with respect to $k$. Because:
\begin{equation}
s_1(n_0 + 1) - s_0(n_0 - 1) = 2^{2^{u_{n_{\scalebox{0.4}{0}}-1}(w)} - 1} + 2^{u_{n_{\scalebox{0.4}{0}}-1}(w)} - u_{n_0-1}(w) - 2 > 0
\end{equation}
holds for any fixed $w \in (0, 1)$, we have:
\begin{equation}
s_1(n_0 + 1) > s_0(n_0 - 1) > 0.
\end{equation}

Together with $\lim_{k \to \infty} s_1(k) = s_0(n_0) < 0$, there exists an integer $n_1$ such that $s_1(k) > 0$ if $n_0 < k < n_1$ and $s_1(k) \leq 0$ if $k \geq n_1$. Thus, we set $a_k = 0$ for $n_0 < k < n_1$ and $a_{n_1} = 1$, which is the second (not counting $a_0$) nonzero term in $\mathbf{a}$. If $s_1(n_1) = 0$, this algorithm ends with a finite $\mathbf{a}$, which takes one of the following forms depending on the position of $a_0$:
\begin{enumerate}
\item $(|, \dots, a_{n_0}, \dots, a_{n_1})$,
\item $(a_{n_0}, \dots, |, \dots, a_{n_1})$,
\item $(a_{n_0}, \dots, a_{n_1}, \dots, |)$.
\end{enumerate}
Except for $a_{n_0} = 1$, $a_{n_1} = 1$, and $a_0 = 1$, all other $a_k$'s (if they exist) in $\mathbf{a}$ are set to 0. Otherwise, if $s_1(n_1) < 0$, proceed to Step III.

\item Find the $j$-th, $j\geq 2$, nonzero term  $a_{n_j}$.\\

Suppose that the $(j-1)$-th (not counting $a_0$) nonzero term in $\mathbf{a}$ is $a_{n_{j-1}}$ and the functions $s_j(k)$ are defined recursively as:
\begin{equation}
s_j(k) = s_{j-1}(n_{j-1}) + u_k(w), \quad k > n_{j-1}, k \neq 0,
\end{equation}
satisfying that $s_{j-1}(k) > 0$ if $n_{j-2} < k < n_{j-1}$ and $s_{j-1}(k) < 0$ if $k \geq n_{j-1}$. For any fixed $w \in (0, 1)$, $s_j(k)$ is strictly monotonically decreasing with respect to $k$ since $u_k(w)$ is strictly monotonically decreasing with respect to $k$. To lighten notations, let $\xi$ denote $u_{n_{j-1}-1}(w)$.
Because:
\begin{equation}
s_j(n_{j-1} + 1) - s_{j-1}(n_{j-1} - 1) = 2^{2^\xi - 1} + 2^\xi - \xi - 2 > 0,
\end{equation}
holds for any $\xi \in (0, 1)$, we have:
\begin{equation}
s_j(n_{j-1} + 1) > s_{j-1}(n_{j-1} - 1) > 0.
\end{equation}

Together with:
\begin{equation}
\lim_{k \to \infty} s_j(k) = s_{j-1}(n_{j-1}) < 0,
\end{equation}
there exists an integer $n_j$ such that $s_j(k) > 0$ if $n_{j-1} < k < n_j$ and $s_j(k) \leq 0$ if $k \geq n_j$. Thus, we set $a_k = 0$ for $n_{j-1} < k < n_j$ and $a_{n_j} = 1$, which is the $j$-th (not counting $a_0$) nonzero term in $\mathbf{a}$. If $s_j(n_j) = 0$, this algorithm ends with a finite $\mathbf{a}$. Otherwise, if $s_j(n_j) < 0$, repeat this process to find the $(j+1)$-th nonzero term $a_{n_{j+1}}$ in $\mathbf{a}$.

Clearly, if $\mathbf{a}$ is infinite by this algorithm, then $w$ satisfies the equation $G_{\mathbf{a}}(w) = 1$, and this completes our proof.

\end{enumerate}
\end{enumerate}
\end{proof}

\section{Numerical Examples}  
In this section, we will first provide an illustrative example to demonstrate how membership values can be calculated using our construction rules. Next, we will present a specific numerical example to verify \textbf{Theorem \ref{cardinality-power-set}}, illustrating the correctness of the fuzzy cardinality formula \eqref{cardinality-powerset} involving the power set. Then, we will present several numerical examples to explain how to use \textbf{Theorem \ref{core-theorem}} to achieve and represent any desired membership value.

\begin{Exa}
Consider a fuzzy set  $\tilde A$ defined on the universe $X=\{x_1,x_2,x_3,x_4\}$ as:
\begin{equation}
\tilde A=  \frac{0.2}{x_1}+\frac{0.3}{x_2}+\frac{0.5}{x_3}+ \frac{1}{x_4} .
\end{equation}
We can construct a new fuzzy set $\tilde B$  on the universe 
\begin{equation}
Y=\Bigg\{ \bigg\{\varnothing, x_1 \bigg\},  \bigg\{\{x_2\},\{x_3\}\bigg\}, \bigg\{x_1,\Big\{x_2,\big\{x_3,\{x_4\}\big\}\Big\}\bigg\}\Bigg\}.
\end{equation}
from $\tilde A$ by applying formula (\ref{rule4}) repeatedly. The resulting values of $\mu_{\tilde B}(y)$ for each $y\in Y$ are:
\begin{equation}
\mu_{\tilde B}\big(\{\varnothing, x_1\}\big)=\left(2^1-1  \right)\left(2^{\mu_{\tilde A}(x_1)}-1  \right)=2^{0.2}-1 \approx 0.1487;
\end{equation}

\begin{equation}
\begin{aligned}
&\mu_{\tilde B}\Big(\big\{\{x_2\},\{x_3\}\big\}\Big)\\
=&\left(2^{\left(2^{\mu_{\tilde A}(x_2)}-1 \right)}-1  \right)\left(2^{\left(2^{\mu_{\tilde A}(x_3)}-1 \right)}-1  \right)\\
=&\left(2^{\left(2^{0.3}-1 \right)}-1  \right)\left(2^{\left(2^{0.5}-1 \right)}-1  \right)\\
\approx&0.0364;
\end{aligned}
\end{equation}

\begin{equation}
\begin{aligned}
&\mu_{\tilde B}\Bigg( \bigg\{x_1,\Big\{x_2,\big\{x_3,\{x_4\}\big\}\Big\}\bigg\}\Bigg)\\
=&\left(2^{\mu_{\tilde A}(x_1)}-1  \right)\left(    2^{\left(2^{\mu_{\tilde A}(x_2)}-1  \right)\left( 2^{\left(2^{\mu_{\tilde A}(x_3)}-1  \right)\left(   2^{\left(2^{\mu_{\tilde A}(x_4)}-1  \right)}-1\right)}-1  \right)}-1    \right)\\
=&\left(2^{0.2}-1  \right)\left(    2^{\left(2^{0.3}-1  \right)\left( 2^{\left(2^{0.5}-1  \right)}-1  \right)}-1    \right)\\
\approx&0.0081.
\end{aligned}
\end{equation}

Therefore, we can represent $\tilde B$ as:

\begin{equation}
\tilde B= \frac{0.1487}{\{\varnothing, x_1\}}+\frac{0.0364}{\{\{x_2\},\{x_3\}\}}+ \frac{0.0081}{\{x_1,\{x_2,\{x_3,\{x_4\}\}\}\} }.
\end{equation}
\end{Exa}

\begin{Exa}
Consider the fuzzy set  $\tilde A$ defined on the universe $X=\{x_1,x_2,x_3\}$ as:
\begin{equation}
\tilde A=  \frac{0.2}{x_1}+\frac{0.3}{x_2}+\frac{0.5}{x_3} .
\end{equation}
Construct a fuzzy set $\tilde B$ on the power set $\mathcal{P}(X)$  of  $X$, and verify \textbf{Theorem \ref{cardinality-power-set}}.
\end{Exa}

According to the definition, the cardinality of the fuzzy set $\tilde A$ is calculated as:
\begin{equation}
\text{card}(\tilde A)=  0.2+0.3+0.5=1.
\end{equation}
We proceed to construct the fuzzy set $\tilde B$ on $\mathcal{P}(X)$
 as follows:
\begin{equation}
\begin{aligned}
\tilde B&=\displaystyle\frac{1}{ \varnothing }+\frac{ \mu_{\tilde B}(\{x_1\})}{ \{x_1\} }+\frac{ \mu_{\tilde B}(\{x_2\})}{ \{x_2\} }+\frac{ \mu_{\tilde B}(\{x_3\})}{ \{x_3\} }+\frac{ \mu_{\tilde B}(\{x_1,x_2\})}{ \{x_1,x_2\} }\\
&+\frac{ \mu_{\tilde B}(\{x_2,x_3\})}{ \{x_2,x_3\} }+\frac{ \mu_{\tilde B}(\{x_1,x_3\})}{ \{x_1,x_3\} }+\frac{ \mu_{\tilde B}(\{x_1,x_2,x_3\})}{ \{x_1,x_2,x_3\} }.
\end{aligned}
\end{equation}
Substituting the membership degrees from $\tilde A$, we have:
\begin{equation}
\mu_{\tilde B}(\{x_1\}) = 2^{\mu_{\tilde A}(x_1)}-1=2^{0.2}-1,
\end{equation}
\begin{equation}
\mu_{\tilde B}(\{x_2\}) = 2^{\mu_{\tilde A}(x_2)}-1=2^{0.3}-1,
\end{equation}
\begin{equation}
\mu_{\tilde B}(\{x_3\}) = 2^{\mu_{\tilde A}(x_3)}-1=2^{0.5}-1.
\end{equation}
For the two-element subsets:
\begin{equation}
\mu_{\tilde B}(\{x_1,x_2\}) = \left(2^{\mu_{\tilde A}(x_1)}-1\right)\left(2^{\mu_{\tilde A}(x_2)}-1\right)=2^{0.5}-2^{0.3}-2^{0.2}+1,
\end{equation}
\begin{equation}
\mu_{\tilde B}(\{x_2,x_3\}) = \left(2^{\mu_{\tilde A}(x_2)}-1\right)\left(2^{\mu_{\tilde A}(x_3)}-1\right)=2^{0.8}-2^{0.5}-2^{0.3}+1,
\end{equation}
\begin{equation}
\mu_{\tilde B}(\{x_1,x_3\}) = \left(2^{\mu_{\tilde A}(x_1)}-1\right)\left(2^{\mu_{\tilde A}(x_3)}-1\right)=2^{0.7}-2^{0.5}-2^{0.2}+1,
\end{equation}
For the three-element subset:
\begin{equation}
\begin{aligned}
\mu_{\tilde B}(\{x_1,x_2,x_3\}) &= \left(2^{\mu_{\tilde A}(x_1)}-1\right)\left(2^{\mu_{\tilde A}(x_2)}-1\right)\left(2^{\mu_{\tilde A}(x_3)}-1\right)\\
&=1+2^{0.2}+2^{0.3}-2^{0.7}-2^{0.8}.
\end{aligned}
\end{equation}
Next, we compute the cardinality of 
$\tilde B$:
\begin{equation}
\begin{aligned}
\text{card}(\tilde B)= &1+2^{0.2}-1+2^{0.3}-1+2^{0.5}-1+2^{0.5}-2^{0.3}-2^{0.2}+1+2^{0.8}-2^{0.5}-2^{0.3}+1\\
&+2^{0.7}-2^{0.5}-2^{0.2}+1+1+2^{0.2}+2^{0.3}-2^{0.7}-2^{0.8}\\
&=2 =2^{\text{card}(\tilde A)}.
\end{aligned}
\end{equation}
This confirms \textbf{Theorem \ref{cardinality-power-set}}.

\begin{Exa}
Given two finite binary sequences, $\mathbf{a} = (10|01)$ and $\mathbf{b} = (|01001)$, we aim to find the corresponding fuzzy sets, $\tilde{A}$ and $\tilde{B}$, both of which are constructed from a single point set $\lbrace x \rbrace$ and defined on $X_{\mathbf{a}}$ and $X_{\mathbf{b}}$, respectively.
\end{Exa}

To find the values of $\mu_{\tilde A}(x)$ and $\mu_{\tilde B}(x)$, we need to solve the following two equations:

\begin{equation}
\log_2(\log_2(u_{a}+1)+1)+u_a+2^{2^{u_a}-1}-1=1,
\end{equation}

\begin{equation}
u_b+2^{2^{u_b}-1}-1+2^{2^{2^ {2^{2^{u_b}-1}-1} -1} -1}-1=1,
\end{equation}

where $u_a$ and $u_b$ represent the values of $\mu_{\tilde A}(x)$ and $\mu_{\tilde B}(x)$, respectively.

Solving these equations gives us $\mu_{\tilde A}(x)\approx 0.3222$ and $\mu_{\tilde B}(x)\approx 0.5087$. Therefore, the two fuzzy sets are:

\begin{equation}
\begin{split}
\tilde A &= \frac{\log_2(\log_2(u_{a}+1)+1)}{\lbrace x \rbrace ^{(-2)}}+\frac{u_{a}}{x}+\frac{2^{2^{u_a}-1}-1}{\lbrace x \rbrace ^{(2)}}\\
&=\frac{0.4884}{\lbrace x \rbrace ^{(-2)}}+\frac{0.3222}{x}+\frac{0.1894}{\lbrace x \rbrace ^{(2)}}
\end{split}
\end{equation}

and

\begin{equation}
\begin{split}
\tilde B &= \frac{u_{b}}{x}+\frac{ 2^{2^{u_b}-1}-1}{\lbrace x \rbrace ^{(2)}}+\frac{2^{2^{2^ {2^{2^{u_b}-1}-1} -1} -1}-1 }{\lbrace x \rbrace ^{(5)}}\\
&=\frac{0.5087}{x}+\frac{ 0.3405}{\lbrace x \rbrace ^{(2)}}+\frac{0.1508}{\lbrace x \rbrace ^{(5)}}.
\end{split}
\end{equation}

It is easy to verify that both $\tilde A$ and $\tilde B$ have a cardinality of 1.

\begin{Exa}
Given two real numbers, $w_a$ and $w_b$, where $w_a=0.3$ and $w_b=0.8$, we need to find binary sequences $\mathbf{a}$ and $\mathbf{b}$ that correspond to fuzzy sets $\tilde A$ and $\tilde B$, respectively. These fuzzy sets are constructed from the set $\lbrace x \rbrace$ and are defined on $X_{\mathbf{a}}$ and $X_{\mathbf{b}}$, respectively. Furthermore, we want the fuzzy sets to satisfy $\mu_{\tilde A}(x)=w_a$ and $\mu_{\tilde B}(x)=w_b$.
\end{Exa}

To determine the initial nonzero term of $\mathbf{a}$, we first evaluate the expression:
\begin{equation}
\log_2(\log_2(\log_2(\log_2(\log_2(0.3 + 1) + 1) + 1) + 1) + 0.3 - 1\approx 0.0061>0
\end{equation}
and
\begin{equation}
\log_2(\log_2(\log_2(\log_2(0.3 + 1) + 1) + 1) + 1) + 0.3 - 1\approx -0.0686<0,
\end{equation}
which indicates that the initial nonzero term in $\mathbf{a}$ is $a_{-4}$. We repeat the same steps for subsequent terms to obtain $a_{5}$, $a_{15}$, $a_{20}$, and so on. Therefore, the binary sequence $\mathbf{a}$ can be expanded as:
\begin{equation}
\mathbf{a} = (1,000, |,000,010,000,000,001,000,010,\dots)
\end{equation}
Similarly, for $w_b=0.8$, the binary sequence $\mathbf{b}$ can be expanded as:
\begin{equation}
\mathbf{b} = (|000,000,001,000,100,010,000,000,000,010,\dots)
\end{equation}

\section{Conclusion}
In this contribution, we propose novel methods for constructing new fuzzy sets from existing ones or classical sets. These methods consist of six interrelated rules that are compatible with classical sets. Our methods are objective, meaning that both the essential problems of constructing new fuzzy sets and achieving the values of membership functions are obtained through objective calculations or by solving objectively constructed equations, without any subjective assumptions.

The first result of our methods generalizes an important property of the power set from classical settings to fuzzy settings. Additionally, the second result reveals deep connections between fuzzy sets and binary sequences, along with a new algorithm. By studying the corresponding binary sequences, it becomes possible to gain a more profound knowledge of fuzzy sets. This will be the subject of our future research.

\section*{Acknowledgment}
This study received funding from the Natural Science Foundation of Southwest University of Science and Technology under Grant 21zx7115.


\bibliographystyle{elsarticle-num}

\bibliography{FSDPSU}

\begin{thebibliography}{10}
\expandafter\ifx\csname url\endcsname\relax
  \def\url#1{\texttt{#1}}\fi
\expandafter\ifx\csname urlprefix\endcsname\relax\def\urlprefix{URL }\fi
\expandafter\ifx\csname href\endcsname\relax
  \def\href#1#2{#2} \def\path#1{#1}\fi

\bibitem{Zadeh1965}
L.~A. Zadeh, Fuzzy sets, Information and Control 8 (1965) 338--353.

\bibitem{Zimmermann2001}
H.~J. Zimmermann, Fuzzy Set Theory and Its Applications, Kluwer Academic
  Publishers, Boston, 2001.

\bibitem{Ashtari2021}
P.~Ashtari, R.~Karami, S.~Farahmand-Tabar, Optimum geometrical pattern and
  design of real-size diagrid structures using accelerated fuzzy-genetic
  algorithm with bilinear membership function, Applied Soft Computing 110
  (2021) 107646.

\bibitem{Zhao2021}
F.~Zhao, F.~Liu, C.~Li, H.~Liu, R.~Lan, J.~Fan, Coarse-fine surrogate model
  driven multiobjective evolutionary fuzzy clustering algorithm with dual
  memberships for noisy image segmentation, Applied Soft Computing 112 (2021)
  107778.

\bibitem{Wang2021}
Q.~Wang, X.~Wang, C.~Fang, J.~Jiao, Fuzzy image clustering incorporating local
  and region-level information with median memberships, Applied Soft Computing
  105 (2021) 107245.

\bibitem{Xie2021}
X.~Xie, Q.~Zhang, An edge-cloud-aided incremental tensor-based fuzzy c-means
  approach with big data fusion for exploring smart data, INFORMATION FUSION 76
  (2021) 168--174.

\bibitem{Lughofer2023}
E.~Lughofer, M.~Pratama, Evolving multi-user fuzzy classifier system with
  advanced explainability and interpretability aspects, INFORMATION FUSION 91
  (2023) 458--476.

\bibitem{Garcia2023}
A.~M. Garcia-Vico, C.~J. Carmona, P.~Gonzalez, M.~J. del Jesus, A distributed
  evolutionary fuzzy system-based method for the fusion of descriptive emerging
  patterns in data streams, INFORMATION FUSION 91 (2023) 412--423.

\bibitem{Hierro2023}
A.~de~Hierro, H.~Bustince, M.~D. Rueda, C.~Roldan, L.~D. Miguel, C.~Guerra, On
  the notion of fuzzy dispersion measure and its application to triangular
  fuzzy numbers, INFORMATION FUSION 100 (2023) 101905.

\bibitem{Pedrycz1994}
W.~Pedrycz, Why triangular membership functions, Fuzzy Sets and Systems 64
  (1994) 21--30.

\bibitem{Wang2015}
Y.~J. Wang, Ranking triangle and trapezoidal fuzzy numbers based on the
  relative preference relation, Applied Mathematical Modelling 39 (2015)
  586--599.

\bibitem{Nguyen2019}
H.~T. Nguyen, C.~L. Walker, E.~A. Walker, A First Course in Fuzzy Logic, CRC
  Press, Boca Raton, 2019.

\bibitem{Liu2009}
X.~Liu, W.~Pedrycz, Axiomatic Fuzzy Set Theory and Its Applications,
  Springer-Verlag, Heidelberg, 2009.

\bibitem{Kandel1978}
A.~Kandel, W.~Byatt, Fuzzy sets, fuzzy algebra, and fuzzy statistics,
  Proceedings of the IEEE 66 (1978) 1619--1639.

\bibitem{Singpurwalla2004}
N.~D. Singpurwalla, J.~M. Booker, Membership functions and probability measures
  of fuzzy sets, Journal of the American Statistical Association 99 (2004)
  867--877.

\bibitem{Ross2010}
T.~J. Ross, Fuzzy logic with engineering applications, John Wiley and Sons,
  Chichester, 2010.

\bibitem{Belohlavek2011}
R.~Belohlavek, G.~J. Klir, Concepts and Fuzzy Logic, The MIT Press, London,
  2011.

\bibitem{Jech2003}
T.~Jech, Set Theory, the Third Millennium Edition, revised and expanded,
  Springer-Verlag, Berlin, 2003.

\bibitem{Wygralak2001}
M.~Wygralak, Fuzzy sets with triangular norms and their cardinality theory,
  Fuzzy Sets and Systems 124 (2001) 1--24.

\bibitem{Dubois1990a}
D.~Dubois, H.~Prade, Measuring properties of fuzzy sets: a general technique
  and its use in fuzzy query evaluation, Fuzzy Sets and Systems 38 (1990)
  137--152.

\bibitem{Dubois1990b}
D.~Dubois, H.~Prade, Scalar evaluation of fuzzy sets, Appl. Math. Lett. 3
  (1990) 37--42.

\bibitem{Delgado2000}
M.~Delgado, D.~Sanchez, M.~A. Vila, Fuzzy cardinality based evaluation of
  quantified sentences, Int. J. Approx. Reason 23 (2000) 23--66.

\bibitem{Chamorro2014}
J.~Chamorro-Martineza, D.~Sánchez, J.~Soto-Hidalgo, P.~Martínez-Jiménez, A
  discussion on fuzzy cardinality and quantification: some applications in
  image processing, Fuzzy Sets and Systems 1257 (2014) 85--101.

\bibitem{Delgado2014}
M.~Delgado, M.~Ruiz, D.~Sánchez, M.~Vila, Fuzzy quantification: state of art,
  Fuzzy Sets and Systems 242 (2014) 1--30.

\bibitem{Michal2016}
M.~Holcapek, A graded approach to cardinal theory of finite fuzzy sets, part
  i:gradedequipollence, Fuzzy Sets and Systems 298 (2016) 158--193.

\bibitem{Michal2020}
M.~Holcapek, A graded approach to cardinal theory of finite fuzzy sets, part
  ii: Fuzzy cardinality measures and their relationship to graded equipollence,
  Fuzzy Sets and Systems 380 (2020) 64--103.

\bibitem{Luca1972}
A.~D. Luca, S.~Termini, A definition of non-probabilistic entropy in the
  setting of fuzzy sets theory, Inform. and Control 20 (1972) 301--312.

\bibitem{Zadeh1979}
L.~A. Zadeh, A Theory of Approximate Reasoning: Machine Intelligence, John
  Wiley and Sons, New York, 1979.

\bibitem{Gottwald1980}
S.~Gottwald, A note on fury cardinals, Kybernetika 16 (1980) 156--158.

\bibitem{Blanchard1982}
N.~Blanchard, Cardinal and ordinal theories about fuzzy sets: Fuzzy.
  Information Processing and Decision Processes, North-Holland, Amsterdam,
  1982.

\bibitem{Wygralak1983}
M.~Wygralak, A new approach to the fuzzy cardinality of finite fuzzy sets,
  BUSEFAL 15 (1983) 72--75.

\bibitem{Dubois1985}
D.~Dubois, H.~Prade, Fuzzy cardinality and the modeling of imprecise
  quantification, Fuzzy Sets and Systems 16 (1985) 199--230.

\end{thebibliography}

%

\end{document}